\documentclass[12pt]{amsart}
\usepackage[frenchb, english]{babel}
\usepackage{amssymb,amsfonts,amsmath}
\usepackage{mathrsfs}
\usepackage{anysize}
\usepackage{bbm}
\usepackage{url}
\usepackage[leqno]{amsmath}
\usepackage{enumitem}
\usepackage{tikz-cd}
\usepackage{stackengine}
\usetikzlibrary{decorations.pathreplacing}
\usepackage{soul}
\setstcolor{red}

\usepackage{changes}
\definechangesauthor[name=JPG, color=blue]{Ze}
\definechangesauthor[name=BP, color=green]{Benito}

\usetikzlibrary{arrows}
\usetikzlibrary{patterns}
\usepackage{bbding}
\usepackage{datetime}
\usepackage{pgfplots}

\usepackage{caption}
\usepackage{tikz}
\usepackage{accents}
\usetikzlibrary{calc}
\usetikzlibrary{arrows}
\usetikzlibrary{backgrounds}
\usepackage{3dplot}
\tdplotsetmaincoords{50}{135}
\tdplotsetrotatedcoords{0}{0}{0} 


%
%
\theoremstyle{plain}
\newcounter{fakecnt}
\newtheorem{theorem}{Theorem}
\newtheorem{lemma}[fakecnt]{Lemma}

\theoremstyle{definition}
\newtheorem{definition}[fakecnt]{Definition}
%

%


\usetikzlibrary{decorations.markings}
\tikzset{negated/.style={
        decoration={markings,
            mark= at position 0.5 with {
                \node[transform shape] (tempnode) {${\scriptstyle\setminus} $};
            }
        },
        postaction={decorate}
    }
}

\tikzset{degil/.style={
            decoration={markings,
            mark= at position 0.5 with {
                  \node[transform shape] (tempnode) {$\backslash$};
                  }
              },
              postaction={decorate}
}
}

\title[Chaotic financial time series]{Chaotic time series in financial processes consisting of savings with piecewise constant monthly contributions}

\author{Jos\'e Pedro Gaiv\~ao}
\address{Departamento de Matem\'atica and CEMAPRE/REM, ISEG, Universidade de Lisboa, Rua do Quelhas 6, 1200-781, Lisboa, Portugal.}
\email{jpgaivao@iseg.ulisboa.pt}

\author{Benito Pires}\thanks{Partially supported by grant \#2019/10269-3, S\~ao Paulo Research Foundation (FAPESP)}
\address{Departamento de Computa\c c\~ao e Matem\'atica, Faculdade de Filosofia,
	Ci\^encias e Letras, Universidade de S\~ao Paulo, Ribeir\~ao Preto, SP,
	14040-901, Brazil.}
\email{benito@usp.br}

\subjclass[2010]{Primary 39Axx, 91G80; Secondary 37N40 }
\keywords{xx, xx, xx}
\date{}

\keywords{difference equations, piecewise contraction, interval dynamics, financial mathe\-matics, topological dynamics, time series of financial processes}

\linespread{1.25}

\begin{document}

\maketitle


\marginsize{2.5cm}{2.5cm}{1cm}{2cm}

\begin{abstract} 
We investigate the time series generated by an elementary and deterministic financial process that consists in making monthly contributions to a savings account subjected to the devaluation by a monthly negative real interest rate. The monthly contribution is a piecewise constant function of the account balance. We show that a dichotomy holds for such  a financial time series: either the financial time series are asymptotic to finitely many periodic {sequences} or the financial time series have an uncountable (Cantor) set of $\omega$-limit points.  We also provide  explicit parameters for which the financial process is chaotic in the sense that the financial time series have sensitive dependence on initial conditions at points of a Cantor attractor.   
 \end{abstract}

  

\maketitle

\section{Introduction}\label{in}

The aim of this article is to  build a bridge between the field of Financial Mathematics\linebreak and the research pursued recently on the topological dynamics of piecewise-affine contractions of the interval, e.g.,  [Calderon \textit{et al.}, 2021; Fernandes \& Pires, 2020; Gaiv\~ao \& Nogueira, 2022; Gambaudo \& Tresser, 1988; Laurent \& Nogueira, 2021; Nogueira \& Pires, 2015; Nogueira \textit{et al.}, 2014; Nogueira \textit{et al.}, 2018; Pires, 2019; Pires, 2020].  It turns out that many models in Economics and Finance can be described using very simple piecewise-affine recurrence relations, e.g., [Asano \& Masanori, 2019; Matsuyama \textit{et al.}, 2018; Umezuki \& Yokoo, 2019]),  and some of these relations are defined by piecewise-affine contractions.  Nevertheless,  as we will see in the present work,  even a very simple recurrence relation defined by a piecewise-affine contraction can display nontrivial dynamics {(e.g. limit cycles or Cantor attractors)} and have sensitive dependence on initial conditions,  which is the key ingredient of chaos.  Of course,  in proving the existence of chaos,  the discontinuities of the model will play a fundamental role.


After the discovery of deterministic chaos,  there has been a great deal of effort to apply methods from nonlinear dynamical systems theory to model the source of randomness that is observed in economic and financial time series, e.g., [Day, 1982; Hommes, 2015; Kelsey, 1988].  In {piecewise smooth dynamical systems},  Lyapunov exponents measure the degree of randomness of the system. {While negative Lyapunov exponents indicate the absence of chaos or the presence of low-complexity chaos, po\-sitive Lyapunov exponents is an indication of high-complexity chaos.} There has been an intensive debate in the literature as to whether economic and financial time series exhibit chaos, e.g., [LeBaron, 1994] and several methods and algorithms have been devised to estimate Lyapunov exponents through the analysis of financial data, e.g., [Schittenkopf \textit{et al.}, 2000; Small \& Tse, 2003].


Probably,  the simplest and oldest financial process modelled by an affine recurrence relation is the compound interest.  For instance,  the interest we pay for loans or the interest we receive from deposits.  Traditionally,  the associated interest rate is positive and the future value increases exponentially.  In this article,  we consider instead negative interest rates.  Therefore,  savers will see part of their savings and deposits diminished.  According to the Fisher equation in Financial Mathematics,  a negative real interest rate occurs when inflation is higher than the nominal interest rate, which is currently the situation in Europe, facing a record annual inflation rate of 8.1\% in May 2022, coming from 7.4\% in April 2022, as reported by Eurostat on 17 of June 2022.  Another possibility is that the nominal interest rate drops below 0\%.  This means that banks and other financial institutions have to pay for keeping their excess cash stored in central banks.  
Nowadays,  negative interest rates are more than an intellectual curiosity.  Since the 2008 financial crisis,  most central banks around the world have lowered their interest rates below 0\%.  For instance,  since 2014,  the European Central Bank (ECB) has reduced the deposit facility rate achieving in 2021 a value of -0.5\% (see [Heider \textit{et al.}, 2021]).  Some banks in Europe can no longer absorb the negative interest rates set by ECB and started to charge  households to keep large amounts of money.  Since March 2021,  two of Germany's largest banks,  namely Deutsche Bank and Commerzbank,  charge a 0.5\% fee  for  keeping  large deposits of new costumers. 
Viewed as a central bank monetary policy tool,  negative interest rates work by stimulating spending and punishing savings,  hence boosting the growth of the economy.  Implicitly,  there is the idea that most people are not anymore willing to keep their money in the bank and should prefer to spend or invest it.  However,  a recent experimental study has shown the contrary,  that there is some tolerance to negative interest rates,  meaning that people prefer to save money in the bank, accepting the losses,  rather than spending it or taking further risks (see [Corneille \textit{et al.}, 2021]).

\section{The mathematical model of the financial process}\label{fp} 
 
In the sequel,  we describe in precise terms the model that will be studied. The financial process we consider here is deterministic,  has discrete time $n\in\{1,2,\ldots\}$ and consists in depositing at the end of every month $n$ some amount $d_n$ of money in a savings account with a negative monthly real interest rate $r\in(-1,0)$.  For our purposes, and to simplify matters,  we consider the interest rate constant during the whole time, since already in this simple scenario, as we will see,  the dynamics of the process is highly nontrivial.  Moreover,  we do not include any random terms to model external effects.  In fact,  our goal in this work is to observe some sort of randomness out of very simple and deterministic rules.

We denote by $S_0$ the initial account balance (the principal) and by $S_n$ the account balance at the month $n$, which is completely determined by the initial value $S_0$. We assume that $d_n$ is chosen out of a  set $\{v_1,v_2\}$ of non-negative values according to the following rule
\begin{equation}\label{policy} d_n=\begin{cases} v_1 & \textrm{if} \quad S_{n-1}< \rho\\
v_2 & \textrm{if} \quad S_{n-1}\ge \rho
\end{cases},
\end{equation}
where $\rho>0$ is a constant called \textit{threshold}.
The  sequence $(S_n)_{n\ge 0}$  is called \textit{a financial time series} since it describes the state of the financial process at the time $n$. 

In the framework of dynamical systems, we have that each financial time series $(S_n)_{n\ge 0}$ is a solution of the recurrence relation 
\begin{equation}\label{Sn} S_n = f(S_{n-1}), \quad n=1,2,\ldots,
\end{equation}
where $f:[0,\infty) \to [0,\infty)$ is the piecewise-affine contraction defined by
\begin{equation}\label{pc}
 f(x) = \begin{cases}
(1+r) x + v_1 & \textrm{if}\quad x<\rho\\
(1+r) x + v_2 & \textrm{if}\quad x\ge\rho
\end{cases}.
\end{equation}

The case in which $v_1=v_2=v$ is classical and appears in many textbooks (see [Fulford \textit{et al.} [1997]).  The general term of the time series $(S_n)_{n\ge 0}$ is given explicitly by the closed expression
\begin{equation}\label{closedexpression} 
S_n = (1+r)^n S_0 + \dfrac{(1+r)^n-1}{r} v, \quad n\ge 0.
\end{equation}
In particular, if we choose $S_0=-\dfrac{v}{r}$, then $S_n=-\dfrac{v}{r}$ for all $n\ge 0$. Moreover, for any other value of $S_0$,  
$$\lim_{n\to\infty} S_n =S_\infty:=- \dfrac{v}{r}.$$
Hence, all the time series\footnote{The expression ``time series" has the same plural and singular forms. Here we mean a collection of sequences. Elsewhere, sometimes we use the same expression to mean a unique sequence.} $(S_n)_{n\ge 0}$ are asymptotic to the constant time series $\left(S_\infty \right)_{n\ge 0}$, that is, 
\begin{equation}\label{a1}
 \lim_{n\to\infty} \left\vert S_n -S_\infty\right\vert=0,\quad \forall\, S_0\ge 0.
\end{equation}

To conclude, when $v_1=v_2$, the long-term behaviour of all the financial time series $(S_n)_{n\ge 0}$ is given by the  constant financial time series $\gamma=\left(S_\infty\right)_{n\ge 0}$. In other words, all the financial time series are asymptotic to $\gamma$ in the sense that \eqref{a1} is satisfied.

 In the case in which $v_1\neq v_2$, the financial process does not admit a closed expression like \eqref{closedexpression}. Moreover, in such a case the interval map \eqref{pc} has a discontinuity.   
 For this reason, it is more difficult to analyse its dynamics. {In fact, only recently the topological dynamics of such maps were completely understood (see, for instance, [Calderon \textit{et al.}, 2021; Fernandes \& Pires, 2020; Gaiv\~ao \& Nogueira, 2022; Nogueira \& Pires, 2015; Nogueira \textit{et al.}, 2014; Nogueira \textit{et al.}, 2018; Pires, 2019; Pires, 2020], for very recent results). In the next section, we provide a complete study of such a case.}

\section{Statement of the results}

{Here we consider the financial process described in $\S\ref{fp}$ with  deposit values $v_1$ and $v_2$ different from each other. To encompass the variety of possible behaviours, we need different notions from topological dynamics.}     

 We say that a sequence $(a_n)_{n\ge 0}$ is \textit{periodic} if $a_{i+n}=a_i$ for some $n\ge 1$ and all $i\ge 0$. We call $N=\min\{n\ge 1: a_n=a_0\}$  the \textit{period} of $(a_n)$. We say that a sequence $(b_n)_{n\ge 0}$ is \textit{asymptotic} to a sequence $(a_n)_{n\ge 0}$ if $\lim_{n\to\infty}\left\vert b_n - a_n\right\vert=0$. Since time series are just sequences of numbers, the previous notion translates to  time series. 
 
 \begin{definition}[asymptotically periodic financial process]\label{def1111} We say that the financial process  described in \S\ref{fp} is \textit{asymptotically periodic} if there is a finite collection of {periodic sequences} such that each financial time series of the process is asymptotic to a periodic sequence of the collection.
\end{definition}

{In this way, if $m$ denotes the number of periodic sequences in  Definition \ref{def1111}, then}
 each time series $(S_n)_{n\ge 0}$ of the process, regardless the initial value $S_0$, gets arbitrarily close, as $n$ tends to $\infty$, to a set of  at most $N=N_1+\cdots + N_m$ values, where $N_i$ is the period of the $i$-th periodic {sequence}. The number $N$ can be very large.

Contrasting with the regular and predictable behaviour of  asymptotically periodic financial processes, the financial process considered here may also have a chaotic behaviour. Such examples are rare and difficult to construct because there exist results that rule out such a behaviour for Lebesgue almost every parameter. More precisely, piecewise-affine contractions such as that in \eqref{pc} are typically asymptotically periodic (see, for instance, the main results in Gaiv\~ao \& Nogueira [2022], Nogueira \textit{et al.} [2014], Nogueira \textit{et al} [2018]). In spite of this, in the next section, we provide an existence result for chaotic examples. Now we will explain what we mean by chaotic behaviour. 

There are a variety of definitions of chaos (Devaney chaos, Li-Yorke chaos, {Wiggins chaos}, etc.). 
The most important ingredient of chaos is the notion of sensitive dependence on initial conditions. 
\begin{definition}[sensitive dependence on initial conditions]\label{def0}
We say that the financial time series $(S_n)_{n\ge 0}$ of the financial process described in \S\ref{fp} have \textit{sensitive dependence on  initial conditions at $S_0\ge 0$} if for some positive constant $\eta>0$ the following is true: for each $\epsilon>0$, there exist $S_0'\ge 0$ and $k\in\mathbb{N}$ such that $\left\vert S_0'-S_0\right\vert\le\epsilon$ and
$\left\vert S_k'-S_k\right\vert\ge\eta$, where $(S_n')_{n\ge 0}$ and $(S_n)_{n\ge 0}$ are time series of the  financial process with starting points $S_0'$ and $S_0$,  respectively.
\end{definition}

The \textit{$\omega$-limit set} of a sequence $(S_n)_{n\ge 0}$ is the set
 \begin{equation}\label{limitset} 
\omega((S_n)_{n\ge 0})=\{p\in \mathbb{R}: \exists  n_1<n_2<n_3<\cdots\,\,\textrm{such that}\,\,\lim_{k\to\infty}S_{n_k}=p\}.
 \end{equation}
  The $\omega$-limit sets of the financial process considered here are the collection of limit sets of each one of its time series $(S_n)_{n\ge 0}$. Concerning asymptotically periodic financial processes, such a collection consists of finitely many finite sets (consisting of the terms of each periodic sequence). In general, such $\omega$-limit sets can also be  Cantor sets (i.e. compact, perfect, nowhere dense sets).
  
  In financial processes with sensitive dependence on initial conditions,  it is impossible to predict the value $S_n$ of an individual time series for $n$ large since any small measurement error of $S_0$ will result in a large effect deviating from the true value $S_n$.  Nevertheless,  it is possible to predict the frequency with which a time series $(S_n)_{n\ge 0}$ visits any interval of values $J$. More precisely, we define the frequency of visit of $(S_n)$ to the interval $J$ as the limit (whenever the limit exists):
\begin{equation}\label{freq}
 \textrm{freq}\, \big((S_n)_{n\ge 0}, J \big)=\lim_{N\to\infty}\dfrac{1}{N} \#\left\{0\le n\le N-1: S_n\in J\right\},
\end{equation}
where $\#$ stands for  cardinality.

\begin{definition}[chaotic financial process]\label{cfp} We say that the financial process described in \S\ref{fp} is \textit{chaotic} if there exists a Cantor set $C\subset [0,\infty)$ such that the following statements are true:
\begin{itemize}
\item [$(C1)$] Each financial time series of the  process has the Cantor set $C$ as its $\omega$-limit set;
\item [$(C2)$] The financial time series of the  process have sensitive dependence on initial conditions at each $S_0\in C$; 
\item [$(C3)$] For each interval $J\subset [0,\infty)$, there exists a constant $c_J\ge 0$ such that 
$$ \lim_{N\to\infty}\dfrac{1}{N} \#\left\{0\le n\le N-1: S_n\in J\right\}=c_J
$$
for all financial time series $(S_n)_{n\ge 0}$.
\end{itemize}
\end{definition}

Now that we have introduced and explored the notions of dynamical systems required to understand the long-term behaviour of financial time series, we are ready to state our results.      

\begin{theorem}\label{thm1} Considering the financial process described in $\S\ref{fp}$, one of the following alternatives occurs:
\begin{itemize} 
\item [$(i)$] The financial time series  are asymptotic to at most two periodic {sequences};

\item [$(ii)$] The financial time series have the same Cantor set as their $\omega$-limit set.

\end{itemize} 
\end{theorem}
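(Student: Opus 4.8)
The plan is to reduce the recurrence $S_n=f(S_{n-1})$ to the iteration of either a single affine contraction or a \emph{contracted rotation} of the circle, and then to quote the known classification of the latter. Set $\lambda:=1+r\in(0,1)$ and, for $i=1,2$, let $p_i:=v_i/(1-\lambda)$ be the fixed point of the affine map $x\mapsto\lambda x+v_i$; since $v_1\neq v_2$ we have $p_1\neq p_2$. The position of the threshold $\rho$ relative to $p_1$ and $p_2$ controls everything, and I would split the analysis into four mutually exclusive and exhaustive cases: (a) $p_1,p_2<\rho$; (b) $p_1,p_2\ge\rho$; (c) $v_1<v_2$ and $p_1<\rho\le p_2$; (d) $v_1>v_2$ and $p_2<\rho\le p_1$.

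In cases (a), (b), (c) the dynamics is elementary: on each of the half-lines $[0,\rho)$ and $[\rho,\infty)$ the map $f$ is a single affine contraction, and a direct computation with the inequalities defining the case shows that one of these half-lines is forward invariant and absorbs, in finitely many steps, every orbit that starts outside it, or else that both half-lines are forward invariant. Iterating an affine contraction, one concludes that every time series is asymptotic to a constant sequence, and that at most two distinct constants occur (the relevant fixed point(s) among $p_1,p_2$, with $\rho$ itself occurring in a few degenerate sub-cases where an orbit converges to $\rho$ from the left without ever reaching it). Thus alternative $(i)$ holds in cases (a), (b), (c).

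Case (d) is the substantial one. Here I would restrict attention to the interval $I:=[\lambda\rho+v_2,\lambda\rho+v_1]$. Using $p_2<\rho\le p_1$, a handful of elementary estimates show that $I$ is compact, contains $\rho$, satisfies $f(I)\subseteq I$, and that every orbit in $[0,\infty)$ enters $I$ after finitely many steps. The virtue of this particular $I$ is that $f$ is increasing on the left piece $[\lambda\rho+v_2,\rho)$ and carries it onto the upper part of $I$ (reaching the left limit $\lambda\rho+v_1$ of $f$ at $\rho$), while it is increasing on the right piece $[\rho,\lambda\rho+v_1]$ and carries it onto the lower part of $I$ (starting from $\lambda\rho+v_2=f(\rho)$), with a genuine gap between the two images; in particular $f|_I$ is injective. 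Consequently the affine homeomorphism $\phi\colon I\to[0,1]$ with $\phi(x)=(x-\lambda\rho-v_2)/(v_1-v_2)$ conjugates $f|_I$ to the contracted rotation $g(t)=\lambda t+\delta\pmod{1}$ of $[0,1)$, where $\delta=1-\lambda c$ and $c=\phi(\rho)=\bigl((1-\lambda)\rho-v_2\bigr)/(v_1-v_2)\in(0,1]$. The degenerate value $c=1$ (that is, $\rho=p_1$) is handled directly: then $I=[\lambda\rho+v_2,\rho]$ and every orbit converges to $\rho$ from the left, so again alternative $(i)$ holds; for $c\in(0,1)$ one has a genuine contracted rotation.

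It then remains to apply the classification of contracted rotations [Laurent \& Nogueira, 2021] (the general theory of interval piecewise contractions, e.g.\ [Nogueira \& Pires, 2015], provides an alternative route): $g$ has a well-defined rotation number $\nu$, and either $\nu$ is rational, in which case $g$ has at most two periodic orbits, of a common period, and every $g$-orbit is asymptotic to one of them; or $\nu$ is irrational, in which case $g$ has a unique minimal set $K$, which is a Cantor set, and every $g$-orbit has $K$ as its $\omega$-limit set. Transporting this back through the conjugacy $\phi$ and recalling that every time series eventually enters $I$, the rational case yields alternative $(i)$ and the irrational case yields alternative $(ii)$ with the common Cantor $\omega$-limit set $C=\phi^{-1}(K)$. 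I expect the main obstacle to be not any single step but the organisation of the non-mixing cases together with their borderline thresholds $\rho=p_1$, $\rho=p_2$, and the matching of Definition~\ref{def1111}'s notion of being ``asymptotic to a periodic sequence'' (whose limiting sequence need not be an $f$-orbit) with the conclusions of the contracted-rotation dichotomy; the conjugacy in case (d) is the conceptual heart of the argument, but it becomes routine once the interval $I$ is chosen correctly.
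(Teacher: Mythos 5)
Your argument is correct, but it takes a genuinely different route from the paper's. The paper does no case analysis at all: Lemma~\ref{lem2} produces a compact forward-invariant interval $[0,M]$ that absorbs every orbit in finitely many steps, and after rescaling to $[0,1]$ the proof invokes Theorem~A of Gambaudo \& Tresser [1988] for a piecewise contraction with one discontinuity $c$: every $\omega$-limit set equals $\omega(f(c^-))\cup\omega(f(c^+))$, and these are either both finite or both equal to one common Cantor set. Because that theorem speaks only of \emph{finite $\omega$-limit sets}, the paper needs the extra step of Lemma~\ref{lem1} (adapted from Pires [2019]) to convert a finite limit set into asymptotic periodicity; your route avoids this entirely, since the contracted-rotation classification you quote delivers asymptotic periodicity directly. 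What your approach buys is explicitness: the four-fold split on the position of $\rho$ relative to the branch fixed points $p_1,p_2$ isolates the only genuinely dynamical situation ($v_1>v_2$ and $p_2<\rho\le p_1$), exhibits the minimal absorbing interval $I=[f(\rho),f(\rho^-)]$, and tells you \emph{which} alternative occurs in terms of the rotation number of $t\mapsto \lambda t+\delta \bmod 1$ --- information the paper's Theorem~\ref{thm1} proof does not provide (and which is essentially what its Theorem~\ref{thm2} has to reconstruct later). The price is the bookkeeping of the non-mixing cases and their boundary thresholds $\rho=p_1$, $\rho=p_2$, which you handle correctly: in cases (a)--(c) and in your degenerate sub-case $c=1$ the orbits converge to one or two constants (possibly to $\rho$ from the left without ever reaching it, which is still a periodic sequence in the sense of Definition~\ref{def1111}). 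I checked the computations in case (d): $I$ contains $\rho$ and is forward invariant and absorbing precisely because $p_2<\rho\le p_1$; the two branch images leave a gap exactly when $v_2<v_1$, so $f|_I$ is injective; and $\phi$ conjugates $f|_I$ to the contracted rotation with offset $\delta=1-\lambda c$. Both black boxes (Gambaudo--Tresser's Theorem~A and the Laurent--Nogueira / Nogueira--Pires classification of two-interval piecewise-affine contractions) are legitimate and yield the same dichotomy, so the two proofs are equally valid.
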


{In what follows, we denote by {$\boldsymbol{\omega}=\omega_0\omega_1\omega_2\ldots$} the \textit{Fibonacci word}, that is, the sequence of binary digits
\begin{equation}\label{fw}
 \boldsymbol{\omega}=010010100100101001010010010100100101001010010010100 . . .
\end{equation}
defined by {$\omega_i=2+\lfloor (i+1)\varphi\rfloor-\lfloor (i+2)\varphi\rfloor$}, where $\varphi=(1+\sqrt{5})/2$ is the golden ratio and $\lfloor x\rfloor$ denotes the integral part of $x$. Notice that the Fibonacci word is the sequence A003849 in the OEIS\footnote{\textsc{The on line encyclopedia of integer sequences}\textsuperscript{\textregistered}.}.
\begin{theorem}\label{thm2} Let $\boldsymbol{\omega}=\omega_0\omega_1\omega_2\ldots$ be the Fibonacci word given by \eqref{fw},   $1<b<\infty$ and $\delta>0$ be given by
\begin{equation}\label{delta}
\delta=1-{\dfrac{1}{b}} + \dfrac{1}{b}\left(1-\dfrac{1}{b}\right)\sum_{k\ge 0} \omega_{k} b^{-k}.
\end{equation}
Then the financial process described in \S\ref{fp} with the parameters
\begin{equation}\label{rho}
{v_1=1000, \quad v_2=500}, \quad r=\dfrac{1}{b}-1, \quad {\rho=\dfrac{500b}{b-1}\left[ b(1-\delta)+1\right]}
\end{equation}
is chaotic.
\end{theorem}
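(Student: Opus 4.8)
The plan is to conjugate $f$ by an affine change of variable to a \emph{contracted rotation} and to read off all three properties from the now well-understood dynamics of those maps. Put $\lambda=1+r=1/b$ and let $F=F_{\lambda,\delta}\colon[0,1)\to[0,1)$ be defined by $F(x)=\lambda x+\delta$ for $x<x_{c}$ and $F(x)=\lambda x+\delta-1$ for $x\ge x_{c}$, where $x_{c}=(1-\delta)/\lambda$. First I would record the elementary consequences of \eqref{delta}: because $\boldsymbol\omega$ is a binary word that is neither $000\ldots$ nor $111\ldots$, one gets $1-\tfrac1b<\delta<1$, whence $x_{c}=b(1-\delta)\in(0,1)$, so $F$ is an honest injective piecewise contraction with a single discontinuity. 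Next I would check the two short identities $h\circ F=f\circ h$ and $h(x_{c})=\rho$ for the affine map $h(x)=500\,x+\dfrac{500\,b\,(2-\delta)}{b-1}$; here the slope $500=v_{1}-v_{2}$ is forced by matching the two branches and the intercept, together with the precise value of $\rho$ in \eqref{rho}, is forced by matching the discontinuities. Hence $I^{*}:=h([0,1))$ is forward invariant under $f$ and $f|_{I^{*}}$ is affinely conjugate to $F$; and since $f$ is increasing on $[0,\rho)$ with fixed point $\tfrac{1000b}{b-1}>\rho$ and increasing on $[\rho,\infty)$ with fixed point $\tfrac{500b}{b-1}<\rho$, one checks that every financial time series eventually crosses the threshold upward into $[\rho,\sup I^{*})\subset I^{*}$ and remains there, so it suffices to prove $(C1)$, $(C2)$, $(C3)$ for $F$.

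For the dynamics of $F$ I would invoke the topological classification of piecewise-affine contractions (as developed in [Nogueira \& Pires, 2015; Nogueira \textit{et al.}, 2014; Laurent \& Nogueira, 2021; Gaiv\~ao \& Nogueira, 2022]): for fixed $\lambda$ the rotation number $\delta\mapsto\rho(\lambda,\delta)$ is a devil's staircase, each irrational value $\alpha$ being attained at exactly one $\delta$, given by an explicit power series in $\lambda$ whose coefficient sequence is the mechanical (Sturmian) sequence of slope $\alpha$. The key point is that \eqref{delta} is precisely this series when $\alpha=1/\varphi^{2}$, since the Fibonacci word is the Sturmian sequence of slope $1/\varphi^{2}$; I would confirm this either by quoting the relevant statement or directly, using the substitution invariance of $\boldsymbol\omega$ and the identity $\varphi^{2}=\varphi+1$ to check that the itinerary of $x_{c}$ under $F$ (the binary sequence recording on which side of $x_{c}$ each iterate falls) equals $\boldsymbol\omega$. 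Thus $\rho(\lambda,\delta)=1/\varphi^{2}$ is irrational, so $F$ (hence $f$) has no periodic orbit, alternative $(i)$ of Theorem~\ref{thm1} is excluded, and we are in case $(ii)$: there is a Cantor set $\mathcal C\subset[0,1)$ which is the $\omega$-limit set of every orbit of $F$, and $C:=h(\mathcal C)$ gives $(C1)$. The same classification furnishes the extra structure: $F|_{\mathcal C}$ is a minimal homeomorphism, uniquely ergodic with a non-atomic invariant measure $\mu$, and is semiconjugate to the irrational rotation $R_{\alpha}$, $\alpha=1/\varphi^{2}$, by a continuous monotone surjection $\pi\colon\mathcal C\to\mathbb T$ with $\pi\circ F=R_{\alpha}\circ\pi$ that is injective off a single countable $R_{\alpha}$-orbit. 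Property $(C3)$ then follows from unique ergodicity: for $S_{0}\in C$ the visit frequency to an interval $J$ equals $\mu(h^{-1}(J)\cap\mathcal C)$ (the endpoints of $J$ being $\mu$-negligible), and for $S_{0}\notin C$ the orbit converges to $C$ and a routine sandwiching argument gives the same limit.

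The remaining property $(C2)$ is the heart of the matter, and the point is that it must hold even though $f$ is a contraction: the separation is produced entirely by the discontinuity. Indeed, if two orbits of $F$ lie at some time $k$ on opposite sides of $x_{c}$, say $F^{k}u<x_{c}\le F^{k}w$, then $F^{k+1}u=\lambda F^{k}u+\delta$ and $F^{k+1}w=\lambda F^{k}w+\delta-1$, so
\[
 F^{k+1}u-F^{k+1}w = 1-\lambda\big(F^{k}w-F^{k}u\big) > 1-\lambda ,
\]
since $0<F^{k}w-F^{k}u<1$. So, given $S_{0}\in C$ and $\epsilon>0$, it suffices to produce $S_{0}'$ with $|S_{0}'-S_{0}|\le\epsilon$ whose orbit straddles $x_{c}$ relative to that of $S_{0}$ at some time. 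Writing $p=h^{-1}(S_{0})\in\mathcal C$, I would use that $\mathcal C$ is perfect and that $\pi$ is non-injective on only a countable set to choose $q\in\mathcal C$ with $0<|q-p|<\epsilon/500$ and $\pi(q)\ne\pi(p)$, and set $S_{0}'=h(q)$. The set $A:=\pi(\mathcal C\cap[0,x_{c}))$ is an arc of $\mathbb T$ of length $1-\alpha\in(0,1)$, and for $y\in\mathcal C$ one has $\pi(y)\in A$ if and only if $y<x_{c}$, with at most one exceptional point. Since $\pi(p)\ne\pi(q)$, $\alpha$ is irrational, and an arc cannot be invariant under the nonzero rotation by $\pi(q)-\pi(p)$, there are infinitely many $k\ge0$ for which exactly one of $R_{\alpha}^{k}\pi(p),R_{\alpha}^{k}\pi(q)$ lies in $A$; for all but at most one such $k$ this means that $F^{k}p$ and $F^{k}q$ lie on opposite sides of $x_{c}$. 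By the displayed inequality and the Lipschitz factor $500$ of $h$ we get $|S_{k+1}'-S_{k+1}|>500(1-\lambda)=500(b-1)/b$, so $(C2)$ holds with, say, $\eta=250(b-1)/b$.

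The step I expect to be the main obstacle is this last one. Making it rigorous requires the fine structure of the semiconjugacy $\pi$: that the two sides of the discontinuity $x_{c}$ correspond to the two arcs of a \emph{nondegenerate} partition of $\mathbb T$, that $\pi$ fails to be injective only over one countable orbit, and that no accidental coincidence pins the orbit of $p$ to that of $q$ for the chosen $q$; the exceptional countable set (orbits through $x_{c}$ and the doubled points of $\mathcal C$) must be handled with care rather than ignored. A secondary technical point is the verification, in the second step, that \eqref{delta} corresponds to the Fibonacci itinerary, i.e. to rotation number $1/\varphi^{2}$: this is a bookkeeping computation with the golden substitution and the geometric series, but it is where all the specific arithmetic of \eqref{delta} and \eqref{rho} is really used.
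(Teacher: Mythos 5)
Your proposal is correct in outline and shares the paper's overall architecture: normalize $f$ by an affine change of variables to a contracted rotation on $[0,1]$, relate that map to the golden rotation $T$ by $\alpha=(3-\sqrt{5})/2=1/\varphi^{2}$, obtain $(C1)$ from the Cantor attractor and $(C3)$ from unique ergodicity. It diverges from the paper at two points worth comparing. First, where you quote the classification of contracted rotations (the devil's staircase of rotation numbers and the explicit power series in $\lambda$ with Sturmian coefficients giving the unique $\delta$ that realizes an irrational $\alpha$) and then verify that \eqref{delta} is that series for the Fibonacci word, the paper's Lemma \ref{lem3} builds the semiconjugacy from scratch by a gap-insertion construction --- each point $T^{k}(0)$ of the rotation orbit is replaced by an interval $G_{k}$ of length $\left(1-\tfrac1b\right)b^{-(k-1)}$ in the same order --- and \emph{derives} \eqref{delta} by summing the lengths of the $G_{k}$ coded by $J_{1}=[0,1-\alpha)$; your route is shorter but outsources to the literature exactly the computation where the arithmetic of \eqref{delta} and \eqref{rho} enters, which is the paper's main construction. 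Second, and more substantively, your proof of $(C2)$ tracks two specific orbits $p,q\in\mathcal{C}$ with $\pi(p)\neq\pi(q)$ and uses density of the rotation orbit in the symmetric difference of two arcs to find infinitely many straddling times; this works, but it genuinely requires the fine structure of $\pi|_{\mathcal{C}}$ (at most two-to-one, injective off the countable set of gap endpoints, the two sides of $x_{c}$ corresponding to a nondegenerate two-arc partition of the circle), which you correctly flag as the delicate step. The paper's argument for $(C2)$ sidesteps all of this: it takes the whole interval $J=(S_{0}-\epsilon,S_{0}+\epsilon)$, assumes for contradiction that no iterate $f^{k}(J)$ contains $\rho$, so that every $f^{k}(J)$ is an interval eventually contained in $K$ with $h\big(f^{k_{0}}(J)\big)$ of positive length by Lemma \ref{lem3b}(III), and then invokes minimality of $T^{-1}$ to force some $T^{m}\Big(h\big(f^{k_{0}}(J)\big)\Big)$ to contain $1-\alpha$, i.e.\ some $f^{k}(J)$ to contain $\rho$; two points of $J$ straddling $\rho$ at that time are then thrown a full gap-length apart. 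That argument needs only that $h$ does not collapse intervals meeting $C$, not any injectivity of $\pi$, so if you want the cheapest rigorous closure of the gap you acknowledge, the interval-based version is preferable. Two small cautions: your claim that every orbit enters $I^{*}$ by ``crossing the threshold upward'' should be split into the two monotone cases (orbits above come down, orbits below go up) as in Lemma \ref{lem3b}(II); and your separation constant should be checked against the gap length $500\left(1-\tfrac1b\right)$ of $K\setminus f(K)$ --- your choice $\eta=250(b-1)/b$ is safely below it, so this is only a matter of bookkeeping.
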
}

{There is nothing special with the Fibonacci word in Theorem \ref{thm2}. In fact, because of Theorem 3.2.11 in Thuswaldner [2020], any Sturmian binary word would work.}


The financial process described in \S\ref{fp} with the parameters provided in Theorem \ref{thm2} is chaotic and, in particular, has sensitive dependence on initial conditions {at points of a Cantor set}. We can observe that phenomenon in a computer simulation of the process. The convergence of the infinite series \eqref{delta} can be very slow if $b$ is close to $1$, or equivalently, if the real interest $r$ is close to $0\%$ in absolute value. To speed up the process and observe the phenomenon of sensitive dependence with a few iterations, we will choose the real interest $r=-50\%$ whose absolute value is high or, equivalenty, we will set $b=2$.

 The figure below  shows the plot of $S_n$ versus $n$ considering the financial process described in \S\ref{fp} with the following parameters
$$v_1=1000, \quad v_2=500, \quad b=2, \quad r=-50\%, \quad {\rho=1709.8034428612914\ldots,} $$
where $\rho$ was calculated using \eqref{delta} and \eqref{rho}.
The plot exhibits the sensitive dependence on initial conditions. \begin{figure}[ht!]
\centering
\includegraphics[scale=1, height=8cm]{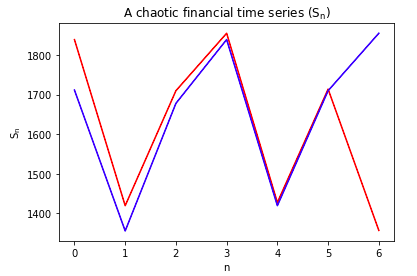}
\end{figure}Computational simulations confirm that  $\textrm{freq}\, \big((S_n)_{n\ge 0}, J \big)$ is constant for $J=[1400,1600]$ (for example) and does not depend on $S_0$. More precisely, we have the following data, considering $N=150$ in \eqref{freq}:

$$ \begin{tabular}{c|c}
$S_0$ &  $\textrm{freq}\, \big((S_n)_{n\ge 0}, [1400,1600] \big)$\\ \hline
$1450$ & 92/150\\
$1380$ & 92/150\\
$1023$ & 92/150\\
$1900$ & 93/150 \\
$800$ & 92/150
\end{tabular}
$$

 \section{Proof of Theorem \ref{thm1}}

We will need some background from Discrete Dynamical Systems. Let $g:I\to I$ be a self-map of an interval $I\subseteq [0,\infty)$. We denote by $g^0$ the identity map on $I$ and by $g^n$ the $n$-ith iterate of $g$ obtained by the composition of $g$ with itself $n$ times. Given $x\in I$, we call the sequence $ O_g(x)=\big(g^n(x)\big)_{n\ge 0}$ 
the \textit{$g$-orbit} of $x$. The $\omega$-limit set of $x$ by $g$ is the set
\begin{equation}\label{glimitset}
\omega_g(x)=\{p\in \mathbb{R}: \exists n_1<n_2<n_3<\cdots\,\, \textrm{such that}\,\, \lim_{k\to\infty} g^{n_k}(x)=p\}.
\end{equation}
A point $x\in I$ is called \textit{periodic} if there exists an integer $n\ge 1$ such that $g^n(x)=x$. In this case,  the $g$-orbit of $x$ is called \textit{periodic}. We say that the $g$-orbit of $x\in I$ is \textit{asymptotic} to the $g$-orbit of $y\in I$ if the sequence $O_g(x)$ is asymptotic to the sequence $O_g(y)$.

In what follows, we consider the financial process described in $\S\ref{fp}$ with parameters
$v_1,v_2\ge 0$, $\rho>0$ and $r\in (-1,0)$. As before, we denote by $S_0$ the initial account balance and by $(S_n)_{n\ge 0}$ a financial time series of the process. By \eqref{Sn} and \eqref{pc}, $(S_n)_{n\ge 0}$ is the $f$-orbit of
$S_0$, where the map $f$ is defined by \eqref{pc}. By \eqref{limitset} and  \eqref{glimitset}, we have that
$ \omega\big((S_n)_{n\ge 0}\big)=\omega_f(S_0)$. Moreover, $(S_n)_{n\ge 0}$ is a periodic time series if and only if the $f$-orbit of $S_0$ is periodic. Yet, $(S_n)_{n\ge 0}$ is asymptotic to a periodic sequence if and only if the $f$-orbit of $S_0$ is asymptotic to a periodic sequence.

\begin{lemma}\label{lem1} Let $S_0\ge 0$. If $\omega_f(S_0)$ is finite, then both the $f$-orbit of $S_0$ and  the financial time series $(S_n)_{n\ge 0}$  are asymptotic to a periodic sequence.
\end{lemma}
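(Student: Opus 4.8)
Recall that the map $f$ in \eqref{pc} is affine with the single slope $\lambda:=1+r\in(0,1)$ on each of $[0,\rho)$ and $[\rho,\infty)$, and is continuous except possibly at $\rho$. My plan is to follow the orbit by a symbolic itinerary in $L:=\omega_f(S_0)$ and to exploit that both affine branches contract at the same rate $\lambda$. First I would observe that $0\le S_n\le\lambda^n S_0+\bar v/(1-\lambda)$ with $\bar v=\max\{v_1,v_2\}$, so the $f$-orbit is bounded and $L$ is a nonempty finite set. Then I would fix $\epsilon>0$ so small that the balls $B(p,\epsilon)$, $p\in L$, are pairwise disjoint and each $B(p,\epsilon)$ with $p\ne\rho$ lies entirely in $[0,\rho)$ or in $(\rho,\infty)$; a routine compactness argument (every cluster point of the orbit lies in $L$) gives $S_n\in\bigcup_{p\in L}B(p,\epsilon)$ for all large $n$, so one can define the itinerary $i_n\in L$ by $S_n\in B(i_n,\epsilon)$.

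The second step is a contraction estimate. Using that $f(p)\in L$ for every $p\in L\setminus\{\rho\}$ (continuity of $f$ at $p$ and invariance of $\omega$-limit sets), that $\lambda\rho+v_1\in L$ whenever $S_n\in B(\rho,\epsilon)$ with $S_n<\rho$ for infinitely many $n$ and likewise $\lambda\rho+v_2=f(\rho)\in L$ for the right side, and that $f$ has slope $\lambda$ on each $B(i_n,\epsilon)$ with $i_n\ne\rho$ and on each one-sided half of $B(\rho,\epsilon)$, I would show that for all large $n$
\begin{equation*}
i_{n+1}=\begin{cases}f(i_n),& i_n\ne\rho,\\ \lambda\rho+v_1,& i_n=\rho,\ S_n<\rho,\\ \lambda\rho+v_2,& i_n=\rho,\ S_n\ge\rho,\end{cases}\qquad\text{and}\qquad |S_{n+1}-i_{n+1}|\le\lambda\,|S_n-i_n|.
\end{equation*}
Iterating yields $|S_n-i_n|\to0$ geometrically, so $(S_n)$ is asymptotic to the $L$-valued sequence $(i_n)$, and it remains only to prove that $(i_n)$ is eventually periodic.

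If from some time on the orbit either never meets $B(\rho,\epsilon)$ or meets it only on one fixed side of $\rho$, then the displayed rule gives $i_{n+1}=\psi(i_n)$ for a single self-map $\psi$ of the finite set $L$ and all large $n$; being the orbit of a self-map of a finite set, $(i_n)$ is then eventually periodic, and extending one of its periods backwards produces a genuinely periodic sequence $(a_n)_{n\ge0}$ with $|S_n-a_n|=|S_n-i_n|\to0$, as desired. The case left to exclude --- the crux of the argument --- is that the orbit lies in $B(\rho,\epsilon)$ with $S_n<\rho$ for infinitely many $n$ and with $S_n\ge\rho$ for infinitely many $n$. Listing the visit times $m_1<m_2<\cdots$ to $B(\rho,\epsilon)$, the dynamics between two consecutive visits is a composition of affine maps of slope $\lambda>0$ that is completely determined by the side of $S_{m_j}$ (the orbit runs along the forward $f$-orbit of $\lambda\rho+v_1$, resp.\ of $\lambda\rho+v_2$, until it first returns to $\rho$); hence, with $\delta_j:=S_{m_j}-\rho$, there are constants $c_1,c_2\ge1$ and $e_1,e_2\in\R$ such that $\delta_{j+1}=\lambda^{c_1}\delta_j+e_1$ when $\delta_j<0$ and $\delta_{j+1}=\lambda^{c_2}\delta_j+e_2$ when $\delta_j\ge0$. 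Since $\delta_j\to0$ by the contraction estimate and each branch is used for infinitely many $j$, letting $j\to\infty$ along each set of indices forces $e_1=e_2=0$; but then $\delta_{j+1}=\lambda^{c_s}\delta_j$ has the same sign as $\delta_j$, so that sign is eventually constant, contradicting that both signs occur infinitely often.

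The only genuinely delicate points I expect are in the second step (verifying that the relevant one-sided images of $\rho$ belong to $L$, and carrying the contraction estimate through the discontinuity) and in the final impossibility argument; the rest is bookkeeping about orbits of a self-map of a finite set, together with standard compactness. The conceptual reason the discontinuity cannot destroy asymptotic periodicity is precisely that the two branches contract at the same positive rate $\lambda$, which is what makes the sign argument on $\delta_j$ go through.
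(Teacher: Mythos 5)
Your proof is correct, but it takes a genuinely different route from the paper's. The paper (adapting Lemma 3.1 of Pires [2019]) never builds an itinerary on $\omega_f(S_0)$: it surrounds each limit point $p_j$ by the two punctured one-sided intervals $(p_j-\varepsilon,p_j)$ and $(p_j,p_j+\varepsilon)$, chosen to avoid the discontinuity $\rho$ even when $\rho\in\omega_f(S_0)$, and shows that $f$ maps each infinitely-visited interval of this finite collection \emph{into} another one. Finiteness then forces a cycle $J_{i_1}\to\cdots\to J_{i_2}=J_{i_1}$ of intervals on each of which $f$ is an increasing affine contraction, and the attracting periodic orbit of the composed contraction is the desired periodic sequence. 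The discontinuity is thus never confronted: the one-sided open intervals simply exclude it. You instead use two-sided balls, track the orbit by the symbolic itinerary $i_n\in\omega_f(S_0)$ with the exact contraction $|S_{n+1}-i_{n+1}|=\lambda|S_n-i_n|$, and must then deal head-on with the only problematic scenario, namely $\rho\in\omega_f(S_0)$ with the orbit accumulating on $\rho$ from both sides; your first-return analysis ($\delta_{j+1}=\lambda^{c_s}\delta_j+e_s$, $\delta_j\to 0$ forcing $e_1=e_2=0$, whence the sign of $\delta_j$ is eventually frozen) correctly rules this out. The one step that needs care --- that the return time $c_s$ and the affine constant $e_s$ depend only on the side $s$ of $\delta_j$ --- does go through, because for $0<t<c_s$ the itinerary point $i_{m_j+t}=f^{t-1}(\lambda\rho+v_s)$ is not $\rho$ and therefore pins down which branch is applied at time $m_j+t$ independently of $j$. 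On balance the paper's argument is shorter and sidesteps the two-sided accumulation case entirely, while yours is more explicit: it exhibits the limiting periodic sequence as an eventual orbit inside $\omega_f(S_0)$ itself and gives the geometric rate $\lambda$ of convergence.
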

\begin{proof} This proof is adapted from Lemma 3.1 in Pires [2019]. 
 We may assume that $$\{S_0, f(S_0), f^2(S_0),\ldots\}$$ is an infinite set, otherwise the $f$-orbit of $S_0$ would be periodic, hence a periodic sequence. Since $\omega_f(S_0)$ is a finite set, we may write $\omega_f(S_0)=\{p_1,\ldots,p_m\}$. Without loss of generality, we may assume that $\omega_f(S_0)\subset (0,\infty)$, thus there exists $\varepsilon>0$ so small that
$$\varepsilon<\frac14 \min_{1\le i\le j\le m} \vert p_i - p_j\vert\quad\textrm{and}\quad
\bigcup_{j=1}^m (p_j-\varepsilon,p_j)\cup (p_j,p_j+\varepsilon)\subset (0,\infty){\setminus}\{\rho\},
$$  
where $\rho\in (0,\infty)$ is the discontinuity of $f$. In particular, if
$$
\mathscr{I}=\left\{(p_1-\varepsilon,p_1), (p_1,p_1+\varepsilon),\ldots,(p_m-\varepsilon,p_m), (p_m,p_m+\varepsilon) \right\},
$$ 
then $f(J)$ is an open interval for every $J\in\mathscr{I}.$

Let $\mathscr{I}'\subseteq \mathscr{I}$ denote the subcollection formed by the intervals that are visited infinitely many times by the  $f$-orbit of $S_0$, that is,
$$
\mathscr{I}'=\left\{J\in\mathscr{I}: \{S_0, f(S_0), f^2(S_0),\ldots\}\cap J\,\,\textrm{is an infinite set}\right\}.
$$
We claim that for each $J_1\in\mathscr{I}'$, there exists $J_2\in\mathscr{I}'$ such that $f(J_1)\subseteq J_2$. Without loss of generality, suppose that $J_1=(p-\varepsilon,p)$, where $p\in \omega_f(S_0)$. As $J_1\subset [0,\infty){\setminus}\{\rho\}$, we have that $f\vert_{J_1}$ is a contraction, thus $f(J_1)$ is an open interval of length smaller than $\varepsilon$. On the other hand, since $J_1\in\mathscr{I}'$, there exists an increasing sequence of integers $0\le k_1<k_2<\cdots$ such that $\left\{f^{k_j}(S_0)\right\}_{j\ge 1}\subset J_1$ and $\lim_{j\to\infty} f^{k_j}(S_0)=p$. Because $f\vert_{J_1}$ is increasing and  continuous, we have that the sequence $\left\{f^{k_j+1}(S_0)\right\}_{j\ge 1}$ is  contained in the open interval $f(J_1)$, has infinitely many distinct terms and  converges to some $q\in \omega_f(S_0)\cap \partial f(J_1)$, where $\partial f (J_1)$ denotes the endpoints of the open interval $f(J_1)$. Putting it all together, we conclude that $f(J_1)$ contains infinitely many points of the $f$-orbit of $S_0$, has length smaller than $\varepsilon$, and has an endpoint in $\omega_f(S_0)$. Therefore, there exists $J_2\in\mathscr{I}'$ such that $f(J_1)\subseteq J_2$.

To finish the proof, notice that there exist $J\in\mathscr{I}'$ and $k'\ge 0$ such that $f^{k'}(S_0)\in J$. By the claim and also because $\mathscr{I}'$ is a finite collection, there exist $1\le i_1<i_2$ and intervals $J_1,\ldots, J_{i_1}, J_{i_1+1},\ldots, J_{i_2}\in\mathscr{I}'$ such that $J_1=J$, $J_{i_1}=J_{i_2}$ and $f(J_i)\subseteq J_{i+1}$ for all $1\le i\le i_2-1$. 
Since $f\vert_{J_i}$ is an increasing contraction for all $1\le i\le i_2-1$, there exists a unique periodic sequence contained in the closure of $\bigcup_{i=i_1}^{i_2-1} J_{i}$ such that the $f$-orbit of $S_0$ is asymptotic to it.
\end{proof}

In order to apply the results in Gambaudo \& Tresser [1988], we need to consider piecewise contractions defined on compact intervals of the form $[0,M]$. The next lemma will be useful to move the problem from $[0,\infty)$ to a compact interval $[0,M]$.

\begin{lemma}\label{lem2} There exists $M>0$ such that $f\big([0,M]\big)\subset [0,M]$. Moreover, for each $S_0\ge 0$, there exist $S_0'\in [0,M]$ and $k\ge 1$ such that  $f^k(S_0)=S_0'$. In particular, 
$\omega_f(S_0)=\omega_f(S_0')$.
\end{lemma}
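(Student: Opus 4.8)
The plan is to produce an explicit invariant interval $[0,M]$ and then show that \emph{every} $f$-orbit lands in it after finitely many steps, by dominating $f$ by a single increasing affine contraction. Throughout write $v_{\max}=\max\{v_1,v_2\}$ and $v_{\min}=\min\{v_1,v_2\}$, and recall $-r\in(0,1)$.

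First I would take $M$ to be any number with $M>\dfrac{v_{\max}}{-r}$ (for instance $M=\dfrac{v_{\max}}{-r}+1$); if one also wants the restricted map to retain its discontinuity, one may in addition require $M>\rho$, which costs nothing. To verify $f\big([0,M]\big)\subseteq[0,M]$, note that on each of its two pieces $f$ is the increasing affine map $x\mapsto(1+r)x+v_i$ with $v_i\in\{v_1,v_2\}$; hence for $x\in[0,M]$ one has $0\le v_{\min}\le f(x)\le(1+r)M+v_{\max}$, and the choice of $M$ gives $v_{\max}<(-r)M$, so $(1+r)M+v_{\max}<(1+r)M+(-r)M=M$. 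Thus $f\big([0,M]\big)\subseteq[0,M)$.

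Next I would prove the absorbing property. Let $g\colon[0,\infty)\to[0,\infty)$ be the affine contraction $g(x)=(1+r)x+v_{\max}$. Since $v_i\le v_{\max}$ we have $f(x)\le g(x)$ for all $x\ge0$, and since $g$ is increasing, a one-line induction gives $f^n(x)\le g^n(x)$ for every $x\ge0$ and $n\ge0$. A direct computation yields $g^n(x)=(1+r)^n\big(x-\tfrac{v_{\max}}{-r}\big)+\tfrac{v_{\max}}{-r}$, which tends to $\tfrac{v_{\max}}{-r}<M$ as $n\to\infty$ because $0<1+r<1$. Therefore, given $S_0\ge0$, there is $k\ge1$ with $g^k(S_0)<M$; together with $0\le f^k(S_0)\le g^k(S_0)$ this gives $S_0':=f^k(S_0)\in[0,M]$. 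Finally, since the $f$-orbit of $S_0'$ is the tail $\big(f^{n+k}(S_0)\big)_{n\ge0}$ of the $f$-orbit of $S_0$, the two sequences have the same set of subsequential limits, i.e. $\omega_f(S_0)=\omega_f(S_0')$.

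The argument is elementary and I expect no serious obstacle; the one point that needs a little care is the choice of $M$. Taking $M=\tfrac{v_{\max}}{-r}$, the ``natural'' scale, does give an invariant interval but is \emph{not} absorbing: as the closed formula \eqref{closedexpression} already shows in the case $v_1=v_2$, an orbit started above $\tfrac{v}{-r}$ decreases to that value without ever reaching it. Enlarging $M$ strictly past $\tfrac{v_{\max}}{-r}$ and then squeezing $f^n$ between $0$ and the monotone majorant $g^n$ is what makes invariance and the absorbing property hold at once; that comparison is the (mild) crux of the proof.
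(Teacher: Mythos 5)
Your proof is correct and follows essentially the same route as the paper: both arguments dominate $f$ by the increasing affine contraction $x\mapsto(1+r)x+v_{\max}$ and use the geometric decay of its iterates toward $v_{\max}/(-r)$ to get both invariance and absorption (the paper takes $M=2v_{\max}/(-r)$ and writes out the geometric sum, while you take $M=v_{\max}/(-r)+1$ and use the fixed-point form of $g^n$, a purely cosmetic difference).
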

\begin{proof} Given $j\in \{1,2\}$, let $f_j:[0,\infty)\to [0,\infty)$ be defined by
 $f_j(x)= (1+r)x+v_j$. Let $v_{\max}=\max_j v_j$. Since $v_1,v_2\ge 0$ and $v_1\neq v_2$, we have that $v_{\max}>0$. Set $M=-2v_{\max}/r$. For some $j\in \{1,2\}$, we have that
 $$ 0\le x\le M\implies  0\le f(x) =  f_j(x)  \le (1+r) x + v_{\max}\le  (1+r) M -\dfrac{r}{2}M< M.
 $$
 Hence, $f\big([0,M]\big)\subset [0,M]$. Moreover,
 $$ x\in [0,\infty) \implies 0\le f(x) \le (1+r) x + v_{max}.
 $$
 Hence, proceeding by induction on $k\in\mathbb{N}$, yields
 $$ f^k(x) \le (1+r)^k x + \left[ (1+r)^{k-1} + (1+r)^{k-2}+ \cdots + (1+r) + 1\right]  v_{\max}.   
 $$
In particular, if $k_0=k_0(x)$ is an integer so large that $(1+r)^{k_0}x<M/2$, then
 $$ f^k(x) \le (1+r)^k x + \dfrac{v_{\max}}{-r}\le M,\quad \forall x\ge 0,\quad \forall k\ge k_0.
 $$
 In this way, given $S_0\ge 0$ and $k_0=k_0(S_0)$ as above, then $f^k(S_0)\in [0,M]$ for all
 $k\ge k_0$.  Therefore, if $S_0'=f^{k_0}(S_0)$, then $f^k(S_0')\in [0,M]$ for all $k\ge 0$. In particular, we have that
 $\omega_f(S_0)=\omega_f(S_0')$.
  \end{proof}
  
  \subsection{Proof of Theorem \ref{thm1}} 
 
  \noindent Let $M$ be as in Lemma \ref{lem2},  $f_M:[0,M]\to [0,M]$ be the map defined by $f_M(x)=f(x)$ and $L:[0,1]\to [0,M]$ be the affine map defined by $L(t)=Mt$. By replacing $f_M$ by $L^{-1}f_ML$, we may assume that $M=1$ so that $f_M$ is a piecewise-contraction of the unit interval $[0,1]$ with a single discontinuity point $c\in(0,1)$. By Theorem A in Gambaudo \& Tresser [1988], for every $x\in [0,1]$, we have $$\omega_{f_M}(x) = \omega_{f_M}(f_M(c^-))\cup \omega_{f_M}(f_M(c^+)),$$  where $f_M(c^\pm)=\lim_{x\to c^\pm}f_M(x)$, and the following dichotomy holds: 
\begin{enumerate}
\item $\omega_{f_M}(f_M(c^{-}))$ and $\omega_{f_M}(f_M(c^{+}))$ are finite sets;
\item $\omega_{f_M}(f_M(c^-))=\omega_{f_M}(f_M(c^+))=C$, where $C$ is a Cantor set.
\end{enumerate}  
By Lemma \ref{lem2}, the same conclusion holds considering all $x\in [0,\infty)$ and the piecewise-contraction $f$. By Lemma \ref{lem1}, this can be translated as follows: either $(i)$ the financial time series of the process are asymptotic to at most two periodic sequences or $(ii)$ the financial time series have the Cantor set $C$ as their $\omega$-limit set, which concludes the proof of Theorem \ref{thm1}.
  
 \section{Proof of Theorem \ref{thm2}} 
  
In what follows, we say that a self-map $g$ of $U\subseteq\mathbb{R}$  and a self-map $\widetilde{g}$ of $\widetilde{U}\subseteq\mathbb{R}$  are \textit{topologically semiconjugate}  if there exists a continuous, nondecreasing and surjective map {$h:U\to \widetilde{U}$}, denominated \textit{topological semiconjugacy}, such that $h\circ g=\widetilde{g}\circ h$. If $h$ is also a {homeomorphism}, then we say that $g$ and $\widetilde{g}$ are \textit{topologically conjugate} and we call the map $h$ a \textit{topological conjugacy}. 
  
\begin{lemma}\label{lem3} Let $\alpha=(3-\sqrt{5})/2$, $1\le b<\infty$, {$\boldsymbol{\omega}$} be the Fibonacci word defined by \eqref{fw} and $\delta>0$ be defined by \eqref{delta}.
Then the piecewise contraction $g:[0,1]\to[0,1]$ and the map $T:[0,1]\to [0,1]$ defined by 
$$g(x)=
\begin{cases} \dfrac1bx+\delta & \textrm{if}\quad x\in \big[0,b(1-\delta)\big)\\[0.1in]
 \dfrac1bx+\delta -1 & \textrm{if}\quad x\in \big[b(1-\delta),1\big]
\end{cases},
\quad
T(x)=
\begin{cases}  x+\alpha & \textrm{if}\quad x\in \big[0, 1-\alpha\big)\\
x + \alpha - 1 & \textrm{if}\quad x\in \big[1-\alpha,1\big]
\end{cases}.
$$
satisfy the following statements:
\begin{itemize} \item [$(i)$ ] $g$ and $T$ are topologically semiconjugate via a topological semiconjugacy $h$;
\item [$(ii)$ ] there exists a Cantor set $C\subset [0,1]$ such that $\omega_g(x)= C$ for all $x\in [0,1]$;
\item [$(iii)$ ] if $J\subset [0,1]$ is an interval of positive length intersecting $ {C}$, then ${h}( {C})$ is also an interval with positive length.  
\end{itemize}
 \end{lemma}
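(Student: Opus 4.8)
\emph{Strategy.} The idea is to recognise that both $g$ and $T$ are \emph{contracted rotations}, i.e.\ maps of the form $x\mapsto\lambda x+\beta\ (\mathrm{mod}\ 1)$ on $[0,1]$, with $\lambda=1/b<1,\ \beta=\delta$ for $g$ and $\lambda=1,\ \beta=\alpha$ for $T$, and then to show that $\delta$ as prescribed by \eqref{delta} is precisely the parameter for which $g$ has rotation number $\alpha$. Once this is established, $(i)$--$(iii)$ follow from the structure theory of contracted rotations (for instance Gambaudo \& Tresser [1988], already invoked in the proof of Theorem \ref{thm1}, and Laurent \& Nogueira [2021]). I would begin with the routine checks: the series in \eqref{delta} is positive and bounded above by $\sum_{k\ge0}b^{-k}=(1-1/b)^{-1}$ because the Fibonacci word is non-constant (indeed it contains no block $11$), so $1-\tfrac1b<\delta<1$; consequently $c:=b(1-\delta)\in(0,1)$, $g$ maps $[0,1]$ into itself, $g$ is injective because its two branch-images $[\delta,1)$ and $\big[0,\tfrac1b+\delta-1\big)$ are disjoint, and $g(c^-)=1$, $g(c^+)=0$ --- the same jump pattern as $T$, for which $T\big((1-\alpha)^-\big)=1$ and $T\big((1-\alpha)^+\big)=0$.

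\emph{The main step, and the main obstacle.} The crux is to compute the kneading data of $g$: I would show that the $g$-itinerary of the critical value $g(c^+)=0$ --- the binary sequence $(\varepsilon_n)_{n\ge0}$ with $\varepsilon_n=1$ iff $g^n(0)\ge c$ --- equals $0\,\omega_0\,\omega_1\,\omega_2\cdots$. The mechanism is renormalisation by the substitution $\psi\colon 0\mapsto 01,\ 1\mapsto 0$, which fixes the Fibonacci word ($\boldsymbol{\omega}=\psi(\boldsymbol{\omega})$): one verifies that the first-return map of $g$ to a suitable subinterval, rescaled affinely, is again $g$, and this is exactly why \eqref{delta} has the shape it does --- algebraically it amounts to the self-referential identity $\tfrac{b}{b-1}\,\delta=1+\tfrac1b\sum_{k\ge0}\omega_k b^{-k}$ that exhibits $\delta$ as the fixed point of the parameter map induced by $\psi$. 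Self-similarity of the return map then forces the kneading sequence to be $\psi$-invariant, hence to equal $\boldsymbol{\omega}$; in particular it is aperiodic, and the rotation number of $g$ is the slope $\alpha$ of $\boldsymbol{\omega}$. I expect the bulk of the write-up to be the inductive bookkeeping that $g^n(0)$ lies on the prescribed side of $c$ at each step, driven by the Fibonacci recursion on the partial sums $\sum_{k<n}\omega_k b^{-k}$. (Alternatively one may cite the explicit Sturmian expansion of contracted rotations in Laurent \& Nogueira [2021] or in Theorem 3.2.11 in Thuswaldner [2020], from which \eqref{delta} is simply the statement that $g$ has rotation number $\alpha$.)

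\emph{Assembling $(i)$ and $(ii)$.} Since the kneading sequence is aperiodic, $g$ is not asymptotically periodic, so Theorem A in Gambaudo \& Tresser [1988] forces the Cantor alternative: there is a Cantor set $C\subset[0,1]$ with $\omega_g(x)=C$ for every $x\in[0,1]$, which is $(ii)$; a direct computation identifies $C=[0,1]\setminus\bigcup_{n\ge0}K_n$, where $K_0=\big(\tfrac1b+\delta-1,\ \delta\big)=\big(g(1),g(0)\big)=[0,1]\setminus\overline{g([0,1])}$ and $K_n=g^n(K_0)$ (these are pairwise disjoint since $g$ is injective, and $\overline{g^n([0,1])}=[0,1]\setminus\bigcup_{k<n}K_k$). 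For $(i)$: because $g$ has the irrational rotation number $\alpha$, the structure theory of contracted rotations provides the unique topological semiconjugacy $h\colon[0,1]\to[0,1]$ onto the rotation $R_\alpha=T$; it is continuous, nondecreasing and surjective, satisfies $h(c)=1-\alpha$, and collapses exactly the complementary intervals of $C$. Concretely $h$ can be built from the coding: send the forward $g$-orbit of $0$ to the forward $T$-orbit of $0$ (these two sequences realise the same cyclic order on $[0,1]$ by the previous step), check the correspondence is order-preserving and intertwines $g$ with $T$, and extend by monotone continuity; surjectivity is automatic because the $T$-orbit of $0$ is dense, and the intertwining $h\circ g=T\circ h$ extends to all of $[0,1]$ --- the only delicate point being $x=c$, where both sides equal $0$ once one knows $h(c)=1-\alpha$ and $h(x)<1-\alpha$ for $x<c$, which holds because $c$ is a two-sided accumulation point of $C$.

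\emph{Item $(iii)$.} From $h(0)=0$, $h(1)=1$ and the conjugacy relation, $h(g(0))=h(g(1))=\alpha$, so $h$ is constant on $\overline{K_0}$ and, iterating, constant on each $\overline{K_n}=g^n(\overline{K_0})$ with value $T^n(\alpha)$. Since $|K_n|=\big(1-\tfrac1b\big)b^{-n}$, one has $\sum_{n\ge0}|K_n|=1$: the plateaus $\overline{K_n}$ already exhaust the Lebesgue measure of $[0,1]$, so $h$ has no further non-degenerate plateau, and therefore $h$ is locally constant exactly on $\bigcup_n K_n=[0,1]\setminus C$. Consequently, if $z<z'$ both lie in $C$ and $(z,z')\cap C\neq\emptyset$, then $h(z)<h(z')$ (otherwise $[z,z']$ would be contained in some $\overline{K_n}$, forcing $(z,z')\cap C\subseteq\partial K_n$, which has only two points). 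Hence, if $J$ is an interval of positive length that is not contained in the closure of a single gap of $C$ and meets $C$ --- in particular whenever $J$ contains points of $C$ with another point of $C$ strictly between them --- then $h$ is non-constant on $J$, so $h(J)$, being the image of an interval under a continuous nondecreasing map, is a non-degenerate interval; and in any case $h(C)=h(\omega_g(x))=\omega_T(h(x))=[0,1]$ by minimality of $T$. This yields $(iii)$.
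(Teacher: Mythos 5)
Your route is genuinely different from the paper's: you start from $g$, try to determine its rotation number and kneading data, and then invoke the structure theory of contracted rotations, whereas the paper goes in the opposite direction --- it blows up the $T$-orbit $p_k=T^k(0)$ into a family of intervals $G_k$ of lengths $(1-\tfrac1b)b^{-(k-1)}$ arranged in the same linear order, defines $h$ by collapsing each $G_k$ to $p_k$ and $\widehat g$ by mapping $G_k$ affinely onto $G_{k+1}$, and then \emph{computes} that the resulting two-branch map has breakpoint $x_1=\sum_{p_k\in[0,1-\alpha)}\vert G_k\vert=b(1-\delta)$ and translation constants $\delta$ and $\delta-1$; formula \eqref{delta} is the output of that computation rather than an input. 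Your treatment of $(ii)$ and of the plateau structure in $(iii)$ (the gaps $K_n$ have lengths summing to $1$, so $h$ can be locally constant only on the gaps) is correct and in fact more explicit than the paper's one-line proof of $(iii)$; you also rightly observe that $(iii)$ as literally stated needs $J$ not to be contained in the closure of a single gap, which is the form in which the lemma is actually applied later.

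The problem is that your central step --- that the kneading sequence of $g$ is the Fibonacci word, equivalently that $g$ has rotation number $\alpha$ --- is not actually established, and the mechanism you propose for it fails as described. The first-return map of $g$ to a subinterval has branches with slopes $b^{-q_1}$ and $b^{-q_2}$ for the two distinct return times $q_1\neq q_2$, so after affine rescaling it is \emph{not} a member of the same one-slope family and in particular is not ``again $g$''; renormalisation of contracted rotations leaves the family $x\mapsto \tfrac1b x+\beta \ (\mathrm{mod}\ 1)$ and lands in genuinely two-slope maps (this is exactly the setting of Laurent \& Nogueira [2021]). Moreover the identity $\tfrac{b}{b-1}\delta=1+\tfrac1b\sum_{k\ge0}\omega_kb^{-k}$ is a mere algebraic rearrangement of \eqref{delta} and does not by itself encode any invariance under the substitution $\psi$. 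Your fallback --- citing the Sturmian/Hecke--Mahler expansion of the parameter in Laurent \& Nogueira [2021] or Thuswaldner [2020] --- is a legitimate way to close the gap, but it still requires verifying that \eqref{delta} coincides with the formula there, with the correct indexing and the correct convention along the orbit of the discontinuity; that verification is precisely the computation the paper carries out when it evaluates $x_1=\left(1-\tfrac1b\right)\sum_{k\ge 0}(1-\omega_k)b^{-k}=b(1-\delta)$. Until that computation (or an equivalent one) is supplied, the proof of $(i)$ is incomplete, and $(ii)$ and $(iii)$ rest on it.
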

 \begin{proof} $(i)$ The proof we present below is an adaptation from the proofs Theorem 2.2 and Corollary 2.5 in Pires [2019]. The approach we use consists of two steps: first we construct  a piecewise contraction $g:[0,1]\to [0,1]$ of the form
 $$g(x)=\begin{cases} \dfrac1bx+b_1 & \textrm{if}\quad x\in \big[0,x_1\big)\\[0.1in]
 \dfrac1bx+ b_2 & \textrm{if}\quad x\in \big[x_1,1\big]
\end{cases}
 $$ 
 topologically semiconjugate to $T$; and then we show that $b_1=\delta$, $b_2=\delta-1$ and $x_1=b(1-\delta)$.    
    
 The sequence $0, \alpha, 2\alpha, 3\alpha-1,\ldots$ whose $k$-th term is $p_k=T^k(0)$, $k\ge 0$, is made up of pairwise distinct terms because, since $\alpha$ is irrational, every $T$-orbit is dense in $[0,1]$. In particular, we have that $\{p_k\}_{k\ge 1}$ is an infinite dense subset of $(0,1)$. Given $k\ge 1$, let 
 $$ \mathcal{L}_k=\{\ell\ge 1: p_{\ell}<p_{k}\},\quad\epsilon_k=\left(1-\dfrac{1}{b}\right) b^{-(k-1)},\quad G_k=\left[ \sum_{\ell\in\mathcal{L}_k} \epsilon_{\ell},\, \epsilon_k + \sum_{\ell\in\mathcal{L}_k} \epsilon_{\ell}
 \right].
 $$
 Notice that $p_k>0$ and $\mathcal{L}_k\neq\emptyset$. Hence, $G_k\subset (0,1)$ is a well-defined interval of length $$\vert G_k\vert=\epsilon_k=\left(1-\dfrac{1}{b}\right)b^{-(k-1)}.$$ We claim that $\{p_k\}_{k\ge 1}$ and $\{G_k\}_{k\ge 1}$ share the same ordering meaning that
 \begin{equation}\label{pk<pj}
 p_k<p_j\iff \sup G_k < \inf G_j.
 \end{equation}
 In fact, $p_k<p_j$ if and only if $ \{k\}\cup \mathcal{L}_k\subset \mathcal{L}_j$, which is equivalent to 
 $$\sup G_k=\epsilon_k+\sum_{\ell\in\mathcal{L}_k} \epsilon_\ell<\sum_{\ell\in\mathcal{L}_j} \epsilon_\ell=\inf G_j.$$
 In particular, we have that the intervals $G_1,G_2,\ldots$ are pairwise disjoint and their union is dense because $\sum_{k=1}^\infty \vert G_k\vert=1$. Applying $(\ref{pk<pj})$ we conclude that if  $J\subset [0,1]$ is an interval and
 $$
 \{m_k\}_{k\ge 1}=\{\ell\ge 1: p_{\ell}\in J\},\quad \textrm{then}\,\, \overline{\cup_{k\ge 1} G_{m_k}}\quad\textrm{is an interval}. 
 $$
 
  Let $\widehat{h}{:}\,\cup_{k\ge1} G_k\to [0,1]$ be the function that on $G_k$ takes the constant value $p_k$. By $(\ref{pk<pj})$, we have that $\widehat{h}$ is nondecreasing and has  dense domain and dense range. Thus, $\widehat{h}$ admits a unique nondecreasing continuous surjective extension $h{:}\,[0,1]\to [0,1]$ to the whole interval $[0,1]$. It is elementary to see that
 $h^{-1}\big(\{p_k\}\big)={G_k}$. 
  
 Let $\widehat{g}:\cup_{k\ge 1} G_k \to \cup_{k\ge 2} G_k$ be such that $\widehat{g}\vert_{G_k}{:}\,G_k\to G_{k+1}$ is an  affine bijection with slope $\dfrac1b$ for every $k\ge 1$. Denote by $I_1, I_2$ the partition of $[0,1]$ defined by
 $I_i=h^{-1}(J_i)$, where $J_1=[0,1-\alpha)$ and $J_2=[1-\alpha, 1]$. Notice that $I_1=[0,x_1)$ and $I_2=[x_1,1]$, where $x_1=h^{-1}(1-\alpha)$. 
  
 We claim that for each $1\le i\le 2$ , there exist a dense subset $\widehat{I}_i$ of $I_i$ and $b_i\in\mathbb{R}$ such that
\begin{equation}\label{lxb}
\widehat{g}(x)=\dfrac{1}{b} x+b_i\quad\textrm{for all}\quad x\in {\widehat I}_i.
\end{equation}
 In order to show that $(\ref{lxb})$ is true, fix $1\le i\le 2$ and let $\{m_k\}_{k\ge 1}=\{\ell\ge 1:p_{\ell}\in J_i\}$,   
 then $\widehat{J_i}={\cup_{k\ge 1} \{p_{m_k}\}}$
  is a dense subset of $ {J_i}$ and  $\widehat{I_i}={\cup_{k\ge 1} G_{m_k}}$ is a dense subset of $I_i$. Moreover, by definition, $\widehat{g}\vert_{G_{m_k}}{:}\,G_{m_k}\to G_{m_k+1}$ is an  affine bijection with slope $\dfrac{1}{b}$ for each $k\ge 1$, which shows that there exists $c_{m_k}\in\mathbb{R}$ such that
  \begin{equation}\label{cmkcmk}
   \widehat{g}(x)=\dfrac{1}{b} x + c_{m_k}\quad\textrm{for all}\quad x\in G_{m_k}.
  \end{equation}
 Let us prove that $\widehat{g}$ is strictly increasing  on $\cup_{k\ge 1} G_{m_k}$. We have that $\widehat{g}$ is strictly increasing on each interval $G_{m_k}$. Let  $y_k<z_j$ be such that $y_k\in G_{m_k}$ and $z_j\in G_{m_j}$, where $k\neq j$ and $\sup G_{m_k}<\inf G_{m_j}$. By $(\ref{pk<pj})$, we have that  
  $p_{m_k}<p_{m_j}$ and $\{p_{m_k},p_{m_j}\}\subset J_i$. Then, since  $T\vert_{J_i}$ is increasing, we have that  $T(p_{m_k})<T(p_{m_j})$, that is,
  $p_{m_k+1} < p_{m_j+1}$. By $(\ref{pk<pj})$ once more, we get $\sup G_{m_k+1}<\inf G_{m_j+1}$.
   By definition, $g(y_k)\in G_{m_k+1}$ and $g(z_j)\in G_{m_j+1}$, thus $g(y_k)<g(z_j)$. This proves that $\widehat{g}$ is increasing on $\cup_{k\ge 1} G_{m_k}$.  It remains to prove that $c_{m_k}$ in $(\ref{cmkcmk})$ is the same for all $k\ge 1$. Let $j\neq k$. Let $x=\sup {G_{m_j}}<\inf G_{m_k}=y$. Notice that because $\widehat{g}$ is increasing on $\cup_{k\ge 1} G_{m_k}$ we have that
   \begin{eqnarray*} 
   \frac1b(y-x)+(c_{m_k}-c_{m_j})&=&
   \widehat{g}(y)-\widehat{g}(x)= \sum_{G_{m_{\ell}}\subset [x,y]} \left\vert \widehat{g}\big(G_{m_{\ell}}\big)\right\vert \\ &=&\frac1b \sum_{G_{m_\ell}\subset [x,y]} |G_{m_{\ell}}|= \frac1b (y-x)
   \end{eqnarray*}
  yielding $c_{m_k}=c_{m_j}$. Thus, $(\ref{lxb})$ is true.
      
  It follows from $(\ref{lxb})$ that $\widehat{g}\vert_{\cup_{k\ge 1} G_{m_k}}$ admits a unique monotone continuous extension to the interval $I_i=h^{-1}(J_i)$. This extension is also an affine map with slope equal to $\frac1b$. Since $i$ is arbitrary, we obtain an injective  piecewise $\frac1b$-affine extension $g$ of $\widehat{g}$  to the whole interval $[0,1]=\cup_{i=1}^2 I_i$. 
        
 We claim that $h\circ g=T\circ h$. In fact, for every $y\in G_k$, we have that 
\begin{equation}\label{eql}
h\big(g(y)\big)=\widehat{h}\big(\widehat{g}(y)\big)=p_{k+1}=T(p_k)=T\big(\widehat{h}(y)\big)=T\big(h(y)\big).
\end{equation}
Hence, $(\ref{eql})$ holds for a dense set of $y\in [0,1]$. By continuity, $(\ref{eql})$ holds for every $y\in [0,1]$. This shows that $g$ is topologically semiconjugate to $T$. Now let us compute the parameters $b_1,b_2$ and $x_1$ in terms of $\delta$. Since $I_1=[0,x_1)$, we have that 
$$ x_1=\left\vert I_1\right\vert=\sum_{p_k\in J_1} \left\vert G_{k}\right\vert=\sum_{T^{k-1}(\alpha)\in J_1} \epsilon_{k}=\sum_{T^{k}(\alpha)\in J_1} \epsilon_{k+1}=
\left(1-\dfrac{1}{b}\right)\sum_{k\ge 0}(2-\theta_k) b^{-k}=b(1-\delta),
$$
 where $\boldsymbol{\theta}=\theta_0\theta_1\ldots$ is the natural $T$-coding of $\alpha$ with respect to the partition
  $J_1=[0,1-\alpha)$, $J_2=[1-\alpha,1]$ of $[0,1]$, that is, $\theta_k=i$ if and only if $T^k(\alpha)\in J_i$.  Since $\alpha=2-\varphi$, where $\varphi=(1+\sqrt{5})/2$ is the golden ratio, we have that
  {$$\boldsymbol{\theta}-1=(\theta_0-1)(\theta_1-1)\ldots$$ 
   is the Fibonacci word $\boldsymbol{\omega}=\omega_0\omega_1\omega_2\ldots$ defined by \eqref{fw}.} {Putting it all together yields 
 $$  \delta=1-\dfrac{1}{b}\left(1-\dfrac{1}{b}\right)\sum_{k\ge 0}(1-\omega_k)b^{-k}=1-\dfrac{1}{b} + \dfrac{1}{b}\left(1-\dfrac{1}{b}\right)\sum_{k\ge 0}\omega_kb^{-k},$$
 showing that $\delta$ is given by \eqref{delta}.}
   
   Since $g$ is topologically semiconjugate to $T$ and $h(x)=0$ iff $x=0$, we have that
 $$ h\big(g(x_1)\big) =T(h(x_1))=T(1-\alpha)=0,\quad \textrm{thus}\quad g(x_1)=0.
 $$
 Therefore, $$0=g(x_1)=g\big(b(1-\delta)\big)=1-\delta+b_2,\quad\textrm{i.e.}\quad b_2={\delta-1.}$$
 Likewise,  since $g$ is topologically semiconjugate to $T$ and $h(x)=1$ iff $x=1$, we have that
 $$ \lim_{\epsilon\to 0+ }h\big(g(x_1-\epsilon)\big)=\lim_{\epsilon\to 0+}T(1-\alpha-\epsilon)=1,\quad\textrm{thus}\quad \lim_{\epsilon\to0+} g(x_1-\epsilon)=1.
 $$
 Therefore,  $$1=\lim_{\epsilon\to0+ }g(x_1-\epsilon)=\lim_{\epsilon\to0+ }g(b(1-\delta)-\epsilon)=1-\delta+b_1,\quad\textrm{i.e.}\quad b_1=\delta.$$
 We have proved that $(i)$ is true.  
   
$(ii)$  It follows from $(i)$ that the maps $g$ and $T$ are topologically semiconjugate via the topological semiconjucagy ${h}$ and that $T$ is minimal, i.e., all $T$-orbits are dense in $[0,1]$. As a result, we have that $g$ is not asymptotically periodic. Since $g$ is a piecewise contraction with one discontinuity, by  proceeding as
   in the proof of Theorem \ref{thm1} we obtain that there exists a Cantor set $C\subset [0,1]$ such that $\omega_g(x)={C}$ for all $x\in [0,1]$.
   
$(iii)$ It follows from the construction of ${h}$ made in the proof of  $(i)$.
 \end{proof}
 \begin{lemma}\label{lem3b} Let $\alpha=(3-\sqrt{5})/2$, $1\le b<\infty$, {$\boldsymbol{\omega}$} be the Fibonacci word defined by \eqref{fw}, $\delta>0$ be defined by \eqref{delta} and $\rho>0$ be defined by \eqref{rho}.
Concerning the piecewise contraction $f:\mathbb{R}\to\mathbb{R}$ and the map $T:[0,1]\to [0,1]$ defined by 
$$f(x)=
\begin{cases} \dfrac1bx+1000 & \textrm{if}\quad x< \rho\\[0.1in]
 \dfrac1bx+500 & \textrm{if}\quad x\ge \rho
\end{cases},
\quad
T(x)=
\begin{cases}  x+\alpha & \textrm{if}\quad x\in \big[0, 1-\alpha\big)\\
x + \alpha - 1 & \textrm{if}\quad x\in \big[1-\alpha,1\big]
\end{cases},
$$
and the interval $$K=\left[ \dfrac{500b}{b-1}(2-\delta), \dfrac{500b}{b-1}\left(3-\delta-\dfrac{1}{b}\right)  \right],$$
the following statements are true:
\begin{itemize} \item [$(I)$ ] $f\vert_{K}$ and $T$ are topologically semiconjugate via a topological semiconjugacy $h$;
\item [$(II)$ ] there exists a Cantor set $ {C}\subset K$ such that $\omega_f(x)= {C}$ for all $x\in\mathbb{R}$;
\item [$(III)$ ] if $J\subseteq K$ is an interval of positive length intersecting $ {C}$, then $h( {C})$ is also an interval with positive length.  
\end{itemize}
 \end{lemma}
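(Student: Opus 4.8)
The plan is to recognize $f\vert_{K}$ as an \emph{affine copy} of the map $g$ of Lemma~\ref{lem3} (taken with the same $b$ and $\delta$) and then to transport the three conclusions of that lemma along the conjugacy; the specific parameter \eqref{rho} enters only in checking that the affine change of coordinates does what is required.

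The one substantive step is to exhibit the conjugacy. Recall that $g$ has slope $1/b$ on each branch with its single discontinuity at $x_1=b(1-\delta)$, and that $f$ too has slope $1/b$ on each branch, so I would look for an affine map intertwining them. Set
\[
\Phi:[0,1]\to\mathbb{R},\qquad \Phi(x)=500\,x+\frac{500b}{b-1}\,(2-\delta),
\]
an increasing affine map. Since $|K|=\frac{500b}{b-1}\bigl(1-\tfrac1b\bigr)=500$, one has $\Phi(0)=\inf K$ and $\Phi(1)=\sup K$, so $\Phi$ is a homeomorphism of $[0,1]$ onto $K$. Using the definition \eqref{rho} of $\rho$ and the elementary identity $(1-\delta)(b-1)+(2-\delta)=b(1-\delta)+1$ one gets $\Phi(x_1)=\frac{500b}{b-1}[b(1-\delta)+1]=\rho$, so $\Phi$ sends the discontinuity of $g$ to that of $f$, and, being increasing, $\Phi(x)<\rho\iff x<x_1$. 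A one-line check then gives $\Phi(g(x))=\tfrac1b\Phi(x)+1000=f(\Phi(x))$ for $x\in[0,x_1)$ and $\Phi(g(x))=\tfrac1b\Phi(x)+500=f(\Phi(x))$ for $x\in[x_1,1]$; hence $\Phi\circ g=f\circ\Phi$ on $[0,1]$. In particular $f(K)=\Phi(g([0,1]))\subseteq\Phi([0,1])=K$, so $f\vert_K:K\to K$ is well defined and topologically conjugate to $g$ via $\Phi$.

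The rest is formal. For $(I)$, if $h_0:[0,1]\to[0,1]$ is the semiconjugacy of Lemma~\ref{lem3}$(i)$ (so $h_0\circ g=T\circ h_0$), then $h:=h_0\circ\Phi^{-1}:K\to[0,1]$ is continuous, nondecreasing and surjective and $h\circ f\vert_K=h_0\circ g\circ\Phi^{-1}=T\circ h_0\circ\Phi^{-1}=T\circ h$. For $(II)$, let $C_0\subseteq[0,1]$ be the Cantor set of Lemma~\ref{lem3}$(ii)$, so $\omega_g(y)=C_0$ for every $y\in[0,1]$, and put $C:=\Phi(C_0)\subseteq K$; as a homeomorphic image of a Cantor set $C$ is a Cantor set, and $\omega_f(x)=\Phi\bigl(\omega_g(\Phi^{-1}(x))\bigr)=C$ for every $x\in K$. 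In particular $f$ is not asymptotically periodic, so in the dichotomy of Theorem~\ref{thm1} (for the financial process with these parameters) alternative $(ii)$ must hold: all $f$-orbits of points of $[0,\infty)$ have a common Cantor set as $\omega$-limit, which is then $C$ because the orbits issuing from the invariant interval $K$ have $\omega$-limit $C$. For $x<0$ the orbit enters $[0,\infty)$ in finitely many steps (while $x<\rho$ the map is $x\mapsto\tfrac1b x+1000$, whose fixed point $\tfrac{1000b}{b-1}$ is positive), so $\omega_f(x)=C$ there as well. Finally $(III)$: if $J\subseteq K$ is an interval of positive length meeting $C$, then $\Phi^{-1}(J)$ is an interval of positive length meeting $C_0$, so by Lemma~\ref{lem3}$(iii)$ its image $h_0(\Phi^{-1}(J))=h(J)$ is an interval of positive length.

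I expect no deep difficulty; the work sits in the second paragraph, verifying that the affine map of slope $500$ taking $0$ to $\tfrac{500b}{b-1}(2-\delta)$ really takes $x_1$ to the prescribed $\rho$ of \eqref{rho} and matches the branches of $f$ with those of $g$. The only mildly delicate point is promoting $\omega_f(\cdot)=C$ from the forward-invariant interval $K$ to all of $\mathbb{R}$, for which I would reuse the mechanism of Theorem~\ref{thm1} (Lemma~\ref{lem2} together with Gambaudo \& Tresser [1988]).
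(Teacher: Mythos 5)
Your proof is correct and follows essentially the same route as the paper: the affine change of coordinates $\Phi$ you construct is exactly the inverse of the map $L(x)=\frac{x}{500}+\frac{b(\delta-2)}{b-1}$ the paper uses to conjugate $f$ to (the extension of) $g$, after which $(I)$--$(III)$ are transported from Lemma~\ref{lem3} in the same way. The only (harmless) variation is in extending $\omega_f(x)=C$ beyond $K$: you invoke the dichotomy of Theorem~\ref{thm1} on $[0,\infty)$ plus a direct argument for $x<0$, whereas the paper argues directly that every $\overline{g}$-orbit enters $[0,1]$ by estimating the drift on each side.
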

 \begin{proof} (I) Let $L:\mathbb{R}\to\mathbb{R}$ be the affine bijection defined by $L(x)=\dfrac{x}{500} +\dfrac{b(\delta-2)}{b-1}$. It is elementary to verify that the map $\overline{g}:\mathbb{R}\to\mathbb{R}$ defined by $\overline{g}=L\circ f\circ L^{-1}$ satisfies
$$\overline{g}(x)=
\begin{cases} \dfrac1bx+\delta & \textrm{if}\quad x <b(1-\delta)\\[0.1in]
 \dfrac1bx+\delta -1 & \textrm{if}\quad x \ge b(1-\delta)
\end{cases}.
$$
and $f$ is topologically conjugate to $\overline{g}$. Moreover, since $L(K)=[0,1]$, we have that

$$ f\vert_K=\overline{g}\vert_{[0,1]}=g,
$$
where $g:[0,1]\to [0,1]$ is as in Lemma \ref{lem3}. Therefore, by Lemma \ref{lem3} $(i)$, the following diagram commutes
\[ \begin{tikzcd}[arrows={-Stealth}]
\mathbb{R}\rar["L"]\dar["f"] & \mathbb{R}\dar["\overline{g}"]   \\%
\mathbb{R}\rar[swap, "{L}"] & \mathbb{R}  
\end{tikzcd}\quad ,\quad
 \begin{tikzcd}[arrows={-Stealth}]
K\rar["L"]\dar["f\vert_K"] & \mbox{[0,1]}\rar["{h_1}"]\dar["g=\overline{g}\vert_{[0,1]}"] & \mbox{[0,1]}\dar["T"] \\%
K\rar[swap, "{L}"] & \mbox{[0,1]}\rar[swap, "{h_1}"] & \mbox{[0,1]}
\end{tikzcd}
\]
 where $h_1$ is the semiconjugacy denoted by $h$ in Lemma \ref{lem3}. Hence, $f\vert_K$ is topologically semiconjugate to $T$ via the topological semiconjugacy $h={h_1}\circ L$, which proves $(I)$. 
 
$(II)$ Let $C_1\subset [0,1]$ be the Cantor set denoted by $C$ in Lemma \ref{lem3}. Let $C\subset K$ be the Cantor set defined by $C=L^{-1}({C_1})$. Since $f$ is topologically
 conjugate to $\overline{g}$ via $L$, it suffices to show that $\omega_{\overline{g}}(x)={C}_1$ for all $x\in\mathbb{R}$. 
 It follows from Lemma \ref{lem3} $(ii)$ that $\omega_{\overline{g}}(x)={C}_1$ for all $x\in [0,1]$ since $\overline{g}\vert_{[0,1]}=g$. It remains to extend such a claim for all $x\not\in [0,1]$. It suffices to show that for each $x\not\in [0,1]$, there exists $k\ge 1$ such $\overline{g}^k(x)\in [0,1]$. Let $x>1>b(1-\delta)$. Then, $\overline{g}^k(x)\ge 0$ for all integers $k\ge 0$ since $\overline{g}$ takes non-negative numbers into non-negative numbers. Hence, it suffices to show that there exists $k\ge 0$ such that $\overline{g}^k(x)\le 1$. Whenever $y>1>b(1-\delta)$, $\overline{g}(y)-y\le\delta-1$, thus $\overline{g}$ moves $y$ to the left by at least $1-\delta$. Hence, $\overline{g}^k(x)\le x - k(1-\delta)$ for all $k\ge 1$ such that $\overline{g}^{k-1}(x)>1$. In this way, for some $k$ large, we have that $\overline{g}^k(x)\le 1$. Now assume that $x<0$, then $\overline{g}^k(x)\le 1$ for all integers $k\ge 0$. Hence, it suffices to show that there exists $k\ge 0$ such that $\overline{g}^k(x)\ge 0$.  Whenever $y<0<b(1-\delta)$, $\overline{g}(y)-y\ge\delta$, thus $\overline{g}$ moves $y$ to the right by at least $\delta$. Hence, $\overline{g}^k(x)\ge x +k\delta$ for all $k\ge 1$ such that $\overline{g}^{k-1}(x)<0$. In this way, for some $k$ large, we have that $\overline{g}^k(x)\ge 0$. Putting it all together, we have that $\omega_{\overline{g}}(x)={C}_1$ for all $x\in\mathbb{R}$. Equivalently, because of the topological conjugacy $L$, we have that $\omega_{f}(x)={C}$ for all $x\in\mathbb{R}$, which concludes the proof of $(II)$. 
 
 $(III)$ Let $J\subseteq K$ be an interval of positive length intersecting $C$. Then $L(J)\subseteq [0,1]$ is an interval of positive length intersecting ${C}_1$.
 By Lemma \ref{lem3} (iii), $h(J)={h_1}\big(L(J)\big)$ has positive length.
   
 \end{proof}

\begin{proof}[Proof of Theorem \ref{thm2}]    
 Let $v_1=1000$, $v_2=500$, $r=\dfrac{1}{b}-1$ and $\rho$ defined by \eqref{rho}. Then, the map $f$ defined in \eqref{pc} is given by
\begin{equation}\label{pc2}
 f(x) = \begin{cases}
\frac1b x + 1000 & \textrm{if}\quad x<\rho\\
\frac1b x + 500 & \textrm{if}\quad x\ge \rho
\end{cases}.
\end{equation}
By Lemma \ref{lem3b} (II), the Cantor set ${C}$ has the property that $\omega_f(x)={C}$ for all $x\in\mathbb{R}$. In this way, if $(S_n)_{n\ge 0}$ is a time series of the process with the parameters given in the statement of Theorem \ref{thm2}, then $S_n=f(S_{n-1})$ for all $n\ge 1$. As a result, 
  $ \omega\big((S_n)_{n\ge 0}\big)=\omega_f(S_0)=C
 $
for all $S_0\ge 0$. We have proved that Condition ($C_1$) in Definition \ref{cfp} is met.

Now let us verify Condition (C2) in Definition \ref{cfp}. Let $S_0\in C$. Given $\epsilon>0$, let
$J=(S_0-\epsilon, S_0+\epsilon)$. We will verify that Definiton \ref{def0} holds true with $\eta=500b\left(1-\dfrac{1}{b}\right)$ (which equals the length of the gap $K{\setminus} f(K)$, where $K$ is as in Lemma \ref{lem3b}). It suffices to show that the discontinuity $\rho\in f^k(J)$ for some $k\ge 0$. In fact, if $k$ is the least non-negative integer such that $\rho\in f^k(J)$, then $f^k(J)$ is an interval with positive length. Moreover, there exists $S_0', S_0''\in J$ such that
$f^k(S_0')<\rho$ and  $f^k(S_0'')>\rho$. It is elementary to verify that $\left\vert f^{k+1}(S_0')-f^{k+1}(S_0'')\right\vert\ge \eta$. It remains to show that $\rho\in f^k(J)$ for some $k\ge 0$. By way of contradiction, assume that $\rho\not\in f^k(J)$ for all $k\ge 0$. Then, $f^k(J)$ is an open interval for all $k\ge 0$. Moreover, by the arguments used in the proof of Lemma \ref{lem3b}, there exists $k_0\ge 0$ such that $f^k({J})\subseteq K$ for all $k\ge k_0$. Moreover, since $S_0\in {C}\cap J$ and since
$\rho\neq f^k(S_0)$ for all $k\ge k_0$, by using a continuity argument, we may show that 
 $f^k({S_0})\in {C}$ for all $k\ge 0$. By Lemma \ref{lem3b} (III),  $h\big(f^{k_0}(J)\big)$ is an interval with positive length. Since $T$ is equivalent to the irrational rotation by  $\alpha$, we have that $T^{-1}$ is equivalent to the irrational rotation by  $-\alpha$. Hence, $T^{-1}$ is also a minimal interval exchange transformation. This means that there exists $m\ge 0$ such that  $T^{-m}(1-\alpha)\in h\big(f^{k_0}(J)\big)$. Equivalently, $1-\alpha\in T^m\Big(h\big(f^{k_0}(J)\big)\Big)$. This implies that $f^{k_0+m}({J})$ hits the discontinuity, which contradicts the induction hypothesis.

To conclude the proof, we will now prove that the frequency with which each financial time series $(S_n)_{n\ge 0}$ visits a given interval $J\subset [0,\infty)$ is constant and does not depend on $S_0$. First notice that if $J\subset [0,\infty){\setminus} K$, then $\textrm{freq}\,\big((S_n)_{n\ge 0},J\big)=0$ for all $S_0\in [0,\infty)$. Thus, to simplify matters, we may assume that $J\subseteq K$. Let $S_0\ge 0$. Then, by Lemma \ref{lem3b} $(I)$,  for all $S_0\ge 0$,
\begin{eqnarray*}
\lim_{N\to\infty}\dfrac{1}{N} \#\left\{0\le n\le N-1: S_n\in J \right\}&=&\lim_{N\to\infty}\dfrac{1}{N} \#\left\{0\le n\le N-1: T^n\big(h(S_0)\big)\in h(J) \right\}\\&=& \textrm{length}\,\big(h(J)\big).
\end{eqnarray*}  
Notice that we have used the fact that since $T$ is equivalent to an irrational rotation, $T$ is uniquely ergodic, thus the Lebesgue measure $\mu$  is its unique invariant probability measure. As a result, the Birkhoff averages converge everywhere to $\int \chi_J \textrm{d}\mu=\textrm{length}\,\big(h(J)\big)$.
\end{proof}






\end{document}